\setlist[enumerate,1]{label={\upshape(\roman*)}}
\newcommand{\nnexteq}{\notag\displaybreak[0]\\ &=}
\newcommand{\nexteq}{\displaybreak[0]\\ &=}
\newtheorem{thm}{Theorem}
\newtheorem{lem}[thm]{Lemma}
\newtheorem{prop}[thm]{Proposition}
\theoremstyle{definition}
\newcommand{\R}{\mathbb{R}}
\newcommand{\Z}{\mathbb{Z}}
\newcommand{\x}{\mathbf{x}}
\newcommand{\w}{\mathbf{w}}
\newcommand{\y}{\mathbf{y}}
\newcommand{\z}{\mathbf{z}}
\newcommand{\bv}{\mathbf{v}}
\newcommand{\bu}{\mathbf{u}}
\newcommand{\br}{\mathbf{r}}
\title{A $2$-distance set with $277$ points in the Euclidean space of dimension $23$}
\author{Hong-Jun Ge, Jack Koolen\thanks{J.H. Koolen is partially supported by the National Natural Science Foundation of China (No. 12471335), and the 
Anhui Initiative in Quantum Information Technologies (No. AHY150000).}, Akihiro Munemasa}
\date{\today}
\begin{document}
\maketitle

{\bf Abstract.}
We construct a $2$-distance set with $277$ points in the $23$-dimensional
Euclidean space $\R^{23}$ having distances $2$ and $\sqrt{6}$.

\section{Introduction}

An $s$-distance set in a Euclidean space $\R^d$ is a point set such that pairwise distances between these points admit only $s$ different values.
Classification and construction of few distance sets in Euclidean space
goes back to the pioneering work of Blokhuis 
\cite{MR751955,MR791015}. He showed that the size of a $2$-distance set
in the $d$-dimensional Euclidean space $\R^d$ is bounded by $\binom{d+2}{2}$.
Lison\v{e}k \cite{MR1429084} determined the sizes of the largest
$2$-distance sets in dimensions $d\leq8$.
Since the set of midpoints of edges of a regular $d$-simplex is a
$2$-distance set of size $\binom{d+1}{2}$, the main research interest lies
in the existence problem of a $2$-distance set whose size exceeds $\binom{d+1}{2}$.
No construction of such sets has been known for $d>8$.

In this paper, we construct a $2$-distance set in 
$\R^{23}$ of size $277$, one larger than the lower bound
$\binom{24}{2}$.
Nozaki and Shinohara
\cite{MR4129024} considered maximal $2$-distance sets containing a regular simplex,
but the sets they produced have smaller size in dimension $23$.
Glazyrin and Yu \cite{MR3787558} determined that, 
if a $2$-distance set lies in the unit sphere $S^{22}\subseteq\R^{23}$,
then its size is bounded by $276$.
Note that, in addition to the standard example of 
the set of $\binom{24}{2}$ midpoints of edges of a regular $23$-simplex,
there are many other $2$-distance sets in $S^{22}$ with the same size.
The unique regular two-graph on $276$ vertices (see \cite{MR384597},
and see \cite[Section~1.5]{BCN} for more information on two-graphs)
gives a set of equiangular lines
in $\R^{23}$, and any set of representing unit vectors forms a $2$-distance set
of size $276$. 
We show that one of the sets of representing unit vectors 
can be used to construct a
maximal $2$-distance set in $\R^{23}$ of size $277$, which cannot be extended to a larger $2$-distance set in $\R^{23}$.

\section{The Construction}
Let $\R^d$ be the $d$-dimensional linear space over $\R$ equipped with the inner product $\langle \bu,\bv\rangle=
\sum_{i=1}^{d}u_iv_i$. As usual, we define the square norm 
$\|\bu\|^2=\langle \bu,\bu\rangle$.

In \cite[Theorem 3.4]{MR384597}, Goethals and Seidel provided a graph $\overline{\Gamma}$ in the switching class of the unique regular two-graph on $276$ vertices, whose Seidel matrix $S=J-I-2A(\overline{\Gamma})$ has spectrum $$\mathrm{Spec}(S)=\{[55]^{23},[-5]^{253}\},$$
where $J$ is the all-ones matrix and $A(\overline{\Gamma})$ is the
adjacency matrix of $\overline{\Gamma}$.
The complement $\Gamma$ of $\overline{\Gamma}$ can be constructed directly
as follows.
Let 
\[X=\{(a,i)\mid a\in \mathbb{F}_3,\;1\leq i\leq 11\},\]
and we consider $X$ naturally as the complete multipartite graph
with $11$ parts, each of which has $3$ vertices.
Let $C$ be the ternary Golay code, and let
$Y=C^\perp$, the dual code of $C$. 
A description of $C$ and $C^\perp$ can be found in
\cite[Lemma 4.3 and Remark 4.4]{MR384597}.
Since $\dim C=6$, we have $|Y|=243$.
We construct the graph $\Gamma$ with vertex set $X\cup Y$ by joining
$(a,i)\in X$ and $y\in Y$ by an edge if $y_i\neq a$, and also joining
$y\in Y$ and $y'\in Y$ by an edge if $y-y'$ has weight $6$, that is,
$y-y'$ has $6$ out of $11$ nonzero coordinates.
Now $S=2A(\Gamma)+I-J$ is the desired Seidel matrix.

From the construction, together with the fact that $C^\perp$
has $132$ vectors of weight $6$, we see that
the partition $X\cup Y$ of the vertex set of $\Gamma$ is equitable, 
with quotient matrix
$$
	\begin{bmatrix}
		30&162\\
		22&132
	\end{bmatrix}.
$$
Thus, $\theta_1=81+\sqrt{6165}$ and $\theta_2=81-\sqrt{6165}$
are two eigenvalues of $A(\Gamma)$.

For each $\alpha\in\{55,-5\}$,
let $V_\alpha$ be the eigenspace of $S$ corresponding to the eigenvalue $\alpha$, and set $W_\alpha:=V_\alpha\,\cap\,\mathbf{1}^{\bot}$, where $\mathbf{1}$ denotes the all-one vector.
Since $\dim W_\alpha\ge \dim V_\alpha-1$, it follows that $\Gamma$ has spectrum $\{[27]^{22},[-3]^{252},[\theta_1]^1,[\theta_2]^1\}$.
Therefore, $A(\Gamma)+3I$ is positive semi-definite with rank $24$, and there exists a set of vectors $\mathbf{V}=\{\mathbf{v}_u\mid u\in X\,\cup\, Y\}$ in $\R^{24}$ satisfying
$\langle \mathbf{v}_u,\mathbf{v}_v\rangle=(A(\Gamma)+3I)_{uv}$
for $u, v\in X\cup Y$. It follows that $\mathbf{V}$
forms a $2$-distance set in $\R^{24}$ consisting of $276$ points
\begin{align}
\|\bv\|^2&=3\quad\quad\ \ \ (\bv\in \mathbf{V}),\notag\\
\|\bv-\bv'\|^2&\in\{4,6\}\quad(\bv,\bv'\in \mathbf{V},\;
\bv\neq\bv').\label{xy}
\end{align}

In what follows, 
we identify the vertices of $\Gamma$ with the corresponding vectors in $\mathbf{V}$.
Then for $\bu,\bv\in \mathbf{V}=X\cup Y$,
\[\langle \bu,\bv\rangle=
\begin{cases}
3&\text{if $\bu=\bv$,}\\
1&\text{if $\bu$ and $\bv$ are adjacent in $\Gamma$,}\\
0&\text{otherwise.}
\end{cases}\]
In \cite[Lemma 4.1]{MR4331063},
Cao~\emph{et~al.} found a vector $\mathbf{r}\in \R^{24}$ such that $\langle \br,\br\rangle=2$ 
and $\langle \br, \bv\rangle=1$ for all $\bv\in \mathbf{V}$. 
The vector $\br$ is called the \emph{switching root}.
More explicitly, $\br$ can be defined as follows.
Let $\{\x_1,\x_2,\x_3\}\subset X$ be the set of vectors corresponding to
one of the $11$ parts of the complete multipartite graph on $X$. Then
these three vectors are pairwise orthogonal. 
Define
\[\br=\x_1+\x_2+\x_3-\frac{4}{33}\sum_{\x\in X}\x+\frac{1}{81}\sum_{\y\in Y}\y.\]
It can be easily verified that $\mathbf{V}$ is contained in the
affine hyperplane 
$\{\bv\in\R^{24}\mid \langle \bv,\br\rangle=1\}\cong\R^{23}$
in $\R^{24}$.

Define
\begin{equation*}
\mathbf{u}=\x_1+\x_2+\x_3-\br.
\end{equation*}
It follows that $\langle \br,\mathbf{u}\rangle=1$ and $\langle\bu,\bu\rangle=5$. 

\begin{thm}
The set $Z:=\{\mathbf{u}\}\cup X\cup Y$ is a $2$-distance set of size
$277$ with squared distances $\{4,6\}$,
contained in the affine hyperplane $\{\bv\in\R^{24}\mid \langle \bv,\br\rangle=1\}\cong\R^{23}$.
\end{thm}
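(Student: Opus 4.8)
The plan is to reduce the statement to a single inner-product computation, since the $276$ points of $X\cup Y$ already form the desired $2$-distance set by (\ref{xy}) and the only new distances to control are those emanating from the single new point $\bu$. Because $\langle\bu,\bu\rangle=5$ while $\langle\bv,\bv\rangle=3$ for every $\bv\in X\cup Y$, I have
$\|\bu-\bv\|^2=5-2\langle\bu,\bv\rangle+3=8-2\langle\bu,\bv\rangle$,
so it suffices to prove $\langle\bu,\bv\rangle\in\{1,2\}$ for every $\bv\in X\cup Y$. Each resulting squared distance then lies in $\{4,6\}$ and is in particular positive, which simultaneously guarantees $\bu\notin X\cup Y$ and hence $|Z|=277$.

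The key simplification is that $\langle\br,\bv\rangle=1$ for all $\bv\in\mathbf V$, so from $\bu=\x_1+\x_2+\x_3-\br$ I obtain $\langle\bu,\bv\rangle=\langle\x_1+\x_2+\x_3,\bv\rangle-1$. Thus the whole problem collapses to computing the quantity $\langle\x_1,\bv\rangle+\langle\x_2,\bv\rangle+\langle\x_3,\bv\rangle$ and checking that it equals $2$ or $3$. Via the inner-product table (value $3$ on the diagonal, $1$ for adjacent pairs, $0$ otherwise), this is purely a question of how many of $\x_1,\x_2,\x_3$ coincide with or are adjacent to $\bv$ in $\Gamma$.

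I would then split into three cases according to the location of $\bv$. Let the part of the complete multipartite graph on $X$ represented by $\{\x_1,\x_2,\x_3\}$ correspond to a fixed coordinate $i_0$, so these are the vectors for $(0,i_0),(1,i_0),(2,i_0)$. First, if $\bv$ lies in this same part, pairwise orthogonality of $\{\x_1,\x_2,\x_3\}$ gives the sum $3$. Second, if $\bv\in X$ lies in a different part, then by definition of the complete multipartite graph $\bv$ is adjacent to all three $\x_i$, so the sum is again $3$. Third, if $\bv=\y\in Y$, then by the bipartite adjacency rule $(a,i_0)\sim\y\iff y_{i_0}\neq a$, exactly two of the three values $a\in\mathbb F_3$ satisfy $y_{i_0}\neq a$, so exactly two of $\x_1,\x_2,\x_3$ are adjacent to $\y$ and the sum is $2$. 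In every case the value lies in $\{2,3\}$, hence $\langle\bu,\bv\rangle\in\{1,2\}$ and $\|\bu-\bv\|^2\in\{4,6\}$.

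Finally I would assemble the conclusion: the distances within $X\cup Y$ lie in $\{4,6\}$ by (\ref{xy}), the distances from $\bu$ lie in $\{4,6\}$ by the case analysis, and both values are realized, so $Z$ is a genuine $2$-distance set of size $277$; membership of $\bu$ in the affine hyperplane is immediate from $\langle\bu,\br\rangle=1$, which together with the known containment of $X\cup Y$ places all of $Z$ in $\{\bv\in\R^{24}\mid\langle\bv,\br\rangle=1\}\cong\R^{23}$. The only genuinely non-routine step is the third case, where one must correctly read off the adjacency rule between $X$ and $Y$ and make the ``exactly two of three'' observation; everything else is bookkeeping with the inner-product table.
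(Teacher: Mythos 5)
Your proposal is correct and follows essentially the same route as the paper: both reduce the theorem to showing $\langle\bu,\z\rangle=2$ for $\z\in X$ and $\langle\bu,\z\rangle=1$ for $\z\in Y$, and then conclude $\|\bu-\z\|^2=8-2\langle\bu,\z\rangle\in\{4,6\}$. The only difference is that you spell out the three-case adjacency analysis (same part, other part of $X$, and the ``exactly two of three'' count for $\y\in Y$) that the paper leaves implicit in its displayed computation, and you explicitly note $\bu\notin X\cup Y$ and the hyperplane membership, which are minor but welcome additions.
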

\begin{proof}
In view of \eqref{xy}, the set $X\cup Y$ is a $2$-distance set of size
$276$ with squared distances $\{4,6\}$.
It suffices to prove
$$
\|\bu-\z\|^2\in\{4,6\}\quad(\z\in X\cup Y).$$
Note that $\langle \mathbf{u},\mathbf{u}\rangle=5$ and
\begin{align*}
\langle \bu,\z\rangle&=\langle \x_1+\x_2+\x_3-\br,\z\rangle\notag
\nexteq
\begin{cases}
2&\text{if $\z\in X$,}\\
1&\text{if $\z\in Y$.}
\end{cases}
\end{align*}
It follows that
\begin{align*}
\|\mathbf{u}-\z\|^2&=5+3-2\langle \mathbf{u},\z\rangle
\nexteq
\begin{cases}
4&\text{if $\z\in X$,}\\
6&\text{if $\z\in Y$.}
\end{cases}
\end{align*}
Therefore,
$\{\mathbf{u}\}\cup X\cup Y$ is a $2$-distance set
having squared distances $\{4,6\}$.
\end{proof}

We remark that the vector $\mathbf{u}$ is independent of the choice of the part.
This can be shown in a more general setting.

\begin{lem}\label{lem:1}
	Let $\mathbf{a}_1,\dots,\mathbf{a}_n,\mathbf{b}_1,\dots,\mathbf{b}_n \in \mathbb{R}^d$ 
	be vectors whose Gram matrix is 
	$\begin{bmatrix} nI & J \\ J & nI \end{bmatrix}$, 
	where $J$ denotes the $n \times n$ matrix with all entries equal to $1$. Then
	\[
	\sum_{i=1}^n \mathbf{a}_i = \sum_{i=1}^n \mathbf{b}_i.
	\]
\end{lem}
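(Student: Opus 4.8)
The plan is to prove the two sums coincide by showing that their difference has zero norm. Write $\mathbf{A}=\sum_{i=1}^n\mathbf{a}_i$ and $\mathbf{B}=\sum_{i=1}^n\mathbf{b}_i$; the goal is then to establish $\|\mathbf{A}-\mathbf{B}\|^2=0$. Everything I need is read directly off the block Gram matrix: the $\mathbf{a}_i$ form an orthogonal system with $\langle\mathbf{a}_i,\mathbf{a}_j\rangle=n\,\delta_{ij}$, the $\mathbf{b}_i$ likewise satisfy $\langle\mathbf{b}_i,\mathbf{b}_j\rangle=n\,\delta_{ij}$, and every cross inner product is $\langle\mathbf{a}_i,\mathbf{b}_j\rangle=1$.

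First I would compute the three scalar quantities $\|\mathbf{A}\|^2$, $\|\mathbf{B}\|^2$, and $\langle\mathbf{A},\mathbf{B}\rangle$ by summing the appropriate Gram entries. The two diagonal blocks $nI$ give $\|\mathbf{A}\|^2=\sum_{i,j}\langle\mathbf{a}_i,\mathbf{a}_j\rangle=n\cdot n=n^2$ and, identically, $\|\mathbf{B}\|^2=n^2$. The off-diagonal block $J$ gives $\langle\mathbf{A},\mathbf{B}\rangle=\sum_{i,j}\langle\mathbf{a}_i,\mathbf{b}_j\rangle=\sum_{i,j}1=n^2$. Expanding the squared norm of the difference then yields $\|\mathbf{A}-\mathbf{B}\|^2=\|\mathbf{A}\|^2-2\langle\mathbf{A},\mathbf{B}\rangle+\|\mathbf{B}\|^2=n^2-2n^2+n^2=0$, which forces $\mathbf{A}=\mathbf{B}$ and proves the lemma.

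A slightly more conceptual variant, which I find cleaner, avoids computing $\|\mathbf{A}\|^2$ and $\|\mathbf{B}\|^2$ separately: for each index $k$ one has $\langle\mathbf{A}-\mathbf{B},\mathbf{a}_k\rangle=n-n=0$ and likewise $\langle\mathbf{A}-\mathbf{B},\mathbf{b}_k\rangle=n-n=0$, so $\mathbf{A}-\mathbf{B}$ is orthogonal to all $2n$ given vectors. Since $\mathbf{A}-\mathbf{B}$ itself lies in their span, it is orthogonal to itself and hence vanishes.

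As for the main obstacle: there is essentially none, as the identity follows by a direct bilinear computation. The only points requiring care are reading the $2\times2$ block structure correctly and keeping the double-sum index counting straight. It is worth recording that neither positive (semi-)definiteness nor even realizability of the prescribed Gram matrix is used anywhere; the conclusion is a purely formal consequence of the listed inner-product values, so the statement holds verbatim for any vectors (over any inner product space) having this Gram matrix.
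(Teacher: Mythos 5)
Your proof is correct and its main argument is exactly the paper's: expand $\bigl\|\sum_i \mathbf{a}_i-\sum_i \mathbf{b}_i\bigr\|^2$ using the Gram entries to get $n^2+n^2-2n^2=0$, forcing the two sums to coincide. The orthogonality variant you add is a pleasant equivalent reformulation, but it does not change the substance of the argument.
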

\begin{proof}
By the Gram matrix,
we have
\begin{align*}
\left\|\sum_{i=1}^n \mathbf{a}_i-\sum_{i=1}^n \mathbf{b}_i\right\|^2&=
\left\|\sum_{i=1}^n \mathbf{a}_i\right\|^2+\left\|\sum_{i=1}^n \mathbf{b}_i\right\|^2
-2\sum_{i=1}^n \sum_{j=1}^n \langle \mathbf{a}_i,\mathbf{b}_j\rangle
\nexteq
n^2+n^2-2n^2
\nexteq
0.
\end{align*}
\end{proof}

Setting $n=3$ in the above lemma, we see that
$\mathbf{u}=\x'_1+\x'_2+\x'_3-\br$ holds whenever
$\{\x'_1,\x'_2,\x'_3\}\subseteq X$
is a set of one of the $11$ parts.

\section{Maximality}

In this section, we show that
our $2$-distance set $Z$ of size $277$ in 
the affine hyperplane 
$H=\{\bv\in\R^{24}\mid \langle \bv,\br\rangle=1\}\cong\R^{23}$ is maximal, 
in the sense that it cannot be extended to a larger $2$-distance set in $\R^{23}$.
However, the $2$-distance set $Z$ is not maximal in $\R^{24}$, since
$Z\cup\{\frac{1+\sqrt{3}}{2} \br,\frac{1-\sqrt{3}}{2} \br\}$ is a larger $2$-distance set. Indeed,
for $\z\in Z$, we have
\begin{align*}
\|\z-\frac{1\pm\sqrt{3}}{2}\br\|^2
&=
\|\z\|^2+2+\sqrt{3}
-(1+\sqrt{3})\langle\z,\br\rangle
\nexteq
\|\z\|^2+1
\nexteq
\begin{cases}
6&\text{if $\z=\bu$,}\\
4&\text{otherwise,}
\end{cases}
\intertext{and}
\|\frac{1+\sqrt{3}}{2} \br-\frac{1-\sqrt{3}}{2} \br\|^2
&=\|\sqrt{3}\br\|^2
\nexteq
6.
\end{align*}
It turns out that this is the only way to add vectors in $\R^{24}$ to $Z$ to maintain
the $2$-distance property, as shown in the following proposition.

\begin{prop}\label{prop:max}
The $2$-distance set $Z$ constructed above is not contained in any larger
$2$-distance set in $H$. The only $2$-distance sets in $\R^{24}$
containing $Z$ properly are
\[
Z\cup\{\frac{1+\sqrt{3}}{2} \br\},\;
Z\cup\{\frac{1-\sqrt{3}}{2} \br\},\text{ and }
Z\cup\{\frac{1+\sqrt{3}}{2} \br,\frac{1-\sqrt{3}}{2} \br\}.
\]
\end{prop}
\begin{proof}
We only prove the second statement, as the first statement follows from the second.
Let $Z'=Z-\bu=\{\z-\bu\mid \z\in Z\}$.
It suffices to show that 
the only $2$-distance set in $\R^{24}$ which properly 
contains $Z'$ is $Z'\cup\{\frac{1+\sqrt{3}}{2} \br-\bu\}$.
Since every pair of vectors in $Z'$ has integer inner product, the $\Z$-span
$M$ of $Z'$ is an integral lattice in $\br^\perp\cong\R^{23}$.

Suppose that there exists a vector $\w\in\R^{24}$ such that $Z'\cup\{\w\}$ is
a $2$-distance set properly containing $Z'$. Since $0\in Z'$, we have
$\langle\w,\w\rangle=4$ or $6$.
Let $\bv$ be the orthogonal projection of $\w$ to $\br^\perp$.
Then $\|\bv\|^2\leq\|\w\|^2\leq6$ and, for $\z'\in Z'$,
\begin{align}
\langle\z',\bv\rangle&=\langle\z',\w\rangle
\nnexteq
\frac12(\|\z'\|^2+\|\w\|^2-\|\z'-\w\|^2)
\notag\\&\in
\begin{cases}
2+\frac12(\|\z'\|^2-\{4,6\})&\text{if $\|\w\|=4$,}\\
3+\frac12(\|\z'\|^2-\{4,6\})&\text{if $\|\w\|=6$}
\end{cases}
\nnexteq
\begin{cases}
\{\frac12\|\z'\|^2,\frac12\|\z'\|^2-1\}
&\text{if $\|\w\|=4$,}\\
\{\frac12\|\z'\|^2,\frac12\|\z'\|^2+1\}
&\text{if $\|\w\|=6$.}
\end{cases}
\label{1234}
\end{align}
In particular, $\langle\z',\bv\rangle\in\Z$ for all $\z'\in Z'$, so
$\bv$ belongs to the dual lattice
\[M^*=\{\x\in \br^\perp\mid \forall \z'\in M,\;\langle\x,\z'\rangle\in\Z\}\]
of $M$. 
By Magma \cite{magma}, it can be verified that $M^*$ contains
$16689170$ nonzero vectors $\bv$ with $\|\bv\|^2\leq6$.
Among them, the only vector $\bv$ that satisfies \eqref{1234} for all $\z'\in Z'$
is $\bv=\frac12\br-\bu$, in which case $\w=\frac{1\pm\sqrt{3}}{2}\br-\bu$.
It follows that the only $2$-distance sets in $\R^{24}$
containing $Z$ properly are the three sets in the statement.
\end{proof}

Lison\v{e}k \cite{MR1429084} showed that there exists a $2$-distance
set in the Euclidean space $\R^7$ consisting of $29=\binom{8}{2}+1$
points. Since $277=\binom{24}{2}+1$, our set may be regarded as an analogue
of Lison\v{e}k's example in $\R^{23}$.
Also, Lison\v{e}k \cite[Theorem~4.4]{MR1429084} showed that
a $2$-distance set of size $29$ in $\R^7$ is unique up to isometry and scaling.
It would be interesting to know if the analogous statement holds for
a $2$-distance set of size $277$ in $\R^{23}$.

Lison\v{e}k \cite{MR1429084} also constructed a $2$-distance set of size 
$45 = \binom{10}{2}$ in $\mathbb{R}^8$, 
which contains the $2$-distance set of size $29$ in $\mathbb{R}^7$ as a subset.
Proposition~\ref{prop:max} shows that
our $277$ points cannot be extended to 
$325=\binom{26}{2}$ points in $\mathbb{R}^{24}$ as a $2$-distance set.

It remains as a challenging problem to decide whether
there exists a $2$-distance set 
in $\R^{24}$ of size $325=\binom{26}{2}$.

\subsection*{Acknowledgements}
The work of the second and third author
was supported by the Research Institute for Mathematical Sciences, an
International Joint Usage/Research Center located in Kyoto University.
We thank Wei-Hsuan Yu for helpful discussions. We also thank the anonymous referees for their valuable comments and suggestions, which helped improve the presentation of this paper.

\subsection*{Data Availability Statement}
No data was used for the research described in the article.

\appendix
\section{Magma code}

\begin{verbatim}
Z:=Integers();
F:=GF(3);
C:=GolayCode(F,false);
X:=[ < a,b > : a in F, b in [1..11] ];
#X eq 33;
EX:={ {i,j} : i,j in {1..33} 
  | i lt j and X[i][2] ne X[j][2] };
K11x3:=MultipartiteGraph([3:i in [1..11]]);
tf:=IsIsomorphic(Graph< 33 | EX >,K11x3);
tf;

Y:=[ y : y in Dual(C) ];
#Y eq 243;

EXY:={ {i,j+33} : i in {1..33}, j in {1..243} 
  | Y[j][X[i][2]] ne X[i][1] };
EY:={ {i+33,j+33} : i,j in {1..243} 
  | Weight(Y[i]-Y[j]) eq 6 };
Gamma:=Graph< 276 | EX join EXY join EY >;
VX:={ Vertices(Gamma) | i : i in {1..33} };
VY:={ Vertices(Gamma) | i : i in {34..276} };
[ [ { #(Neighbours(x) meet VX) : x in VX },
  { #(Neighbours(x) meet VY) : x in VX } ],
  [ { #(Neighbours(y) meet VX) : y in VY },
  { #(Neighbours(y) meet VY) : y in VY } ] ]
  eq [ [ {30},{162} ], [ {22},{132} ] ];

J:=Matrix(Z,276,276,[1:i in [1..276^2]]);
I:=ScalarMatrix(276,1);
A:=AdjacencyMatrix(Gamma);
RX<x>:=PolynomialRing(Z);
RX!CharacteristicPolynomial(A)
  eq (x-27)^22*(x+3)^252*(x^2-162*x+396);
S:=2*A+I-J;
Eigenvalues(S) eq { <55, 23>, <-5, 253> };

gram,XX,r:=LLLGram(A+3*I);
Xi:=XX^(-1);
L:=LatticeWithGram(Submatrix(gram,[1..r],[1..r]));
X276:=[ L![ Xi[i,j] : j in [1..r] ] : i in [1..276] ];
roots:=&join{ {r[1],-r[1]} : r in ShortVectors(L,2,2) };
#roots eq 2;
rt:=[ r : r in roots | (X276[1],r) eq 1 ][1];
&and{ (rt,x) eq 1 : x in X276 };
&and{ (x-y,x-y) in {4,6} : x,y in X276 | x ne y };
u:=&+[ X276[i] : i in [1..3] ]-rt;
(u,u) eq 5 and (rt,u) eq 1;
&and{ (u-x,u-x) in {4,6} : x in X276 };
XL:=[ X276[i] : i in [1..33] ];
YL:=[ X276[i] : i in [34..276] ];
&and{ (XL[i],XL[j]) eq 0 : i,j in [1..3] | i ne j };
rhs:=XL[1]+XL[2]+XL[3]-4/33*&+XL+1/81*&+YL;
Parent(rhs)!rt eq rhs;
Y276:={ x-u : x in X276 };
M1:=sub< L | Y276 >;
Rank(M1) eq 23;
MD:=Dual(M1:Rescale:=false);
Y276D:={ MD | y : y in Y276 };
Minimum(MD) eq 5/2;
Ms:=ShortVectors(MD,5/2,6);
2*#Ms eq 16689170;

adm4:=func< v | forall(z){ z : z in Y276D 
  | (z,v) in { i,i-1 } where i:=(z,z)/2 } >;
not &or{ adm4(w[1]) or adm4(-w[1]) : w in Ms };

adm6:=func< v | forall(z){ z : z in Y276D 
  | (z,v) in { i,i+1 } where i:=(z,z)/2 } >;
&join{ { v : v in { w[1],-w[1] } | adm6(v) } : w in Ms } 
  eq { 1/2*rt-u };

// Total time: 788.600 seconds, Total memory usage: 4584.38MB
\end{verbatim}
\end{document}